%%%%%%%%%%%%%%%%%%%%%%%%%%%%%%%%%%%%%%%%%%%%%%%%%%%%%%%%%%%%%%%%%%%%%%%%%%%%%%%%

\documentclass[letterpaper, 10 pt, conference]{ieeeconf}  % Comment this line out
% if you need a4paper
%\documentclass[a4paper, 10pt, conference]{ieeeconf}      
% Use this line for a4 paper

\IEEEoverridecommandlockouts                              
% This command is only needed if you want to 
% use the \thanks command
% This command is only needed if you want to
% use the \thanks command
\overrideIEEEmargins
% See the \addtolength command later in the file to balance the column lengths
% on the last page of the document

% The following packages can be found on http:\\www.ctan.org
\usepackage{graphics} % for pdf, bitmapped graphics files
\usepackage{epsfig} % for postscript graphics files
\usepackage{amsmath} % assumes amsmath package installed
\usepackage{amssymb}  % assumes amsmath package installed
\usepackage[dvipsnames]{xcolor}

\usepackage{stmaryrd} %for llbracket
\usepackage{bbm} % for mathbbm
\usepackage{algorithmic}
\usepackage[linesnumbered,ruled,vlined]{algorithm2e}
\usepackage{caption}
\usepackage{subcaption}
\usepackage{booktabs}
\usepackage{dsfont}
\usepackage{mathtools}
\usepackage[normalem]{ulem}

\newcommand{\bsalpha}{\boldsymbol{\alpha}}

\newcommand{\RR}{\mathbb{R}}
\newcommand{\EE}{\mathbb{E}}

\usepackage{blkarray}

\newcommand\cL{\mathcal L}

\newcommand\bK{\boldsymbol{K}}
\newcommand\bI{\boldsymbol{I}}
\newcommand\bS{\boldsymbol{S}}

\newcommand\bu{\boldsymbol{u}}
\newcommand\bp{\boldsymbol{p}}

\newcommand\bZ{\boldsymbol{Z}}

\newcommand{\nqq}{q_{e^{\prime}, e}^x\left(\hat{\phi}^{n, x}\left(t, e^{\prime}\right), \hat{Z}_{t, \bK}^{n, x}, \hat{Z}_{t, \bI}^{n, x}\right)}
\newcommand{\qq}{q_{e^{\prime}, e}^x\left(\hat{\phi}^{x}\left(t, e^{\prime}\right), \hat{Z}_{t, \bK}^{x}, \hat{Z}_{t, \bI}^{x}\right)}
\newcommand{\npp}{\hat{p}^{n, x}(t,e')}
\newcommand{\pp}{\hat{p}^{x}(t,e')}
\newcommand{\uu}{\hat{u}^{x}(s, \cdot)}
\newcommand{\nuu}{\hat{u}^{n, x}(s, \cdot)}
\newcommand{\bddlambdaI}{\bar{\lambda}_{\bI}}
\newcommand{\bddlambdaK}{\bar{\lambda}_{\bK}}

\newcommand\bbA{\mathbb A}

\newtheorem{theorem}{Theorem}[section]

\newtheorem{assumption}[theorem]{Assumption}
\newtheorem{definition}[theorem]{Definition}
\newtheorem{lemma}[theorem]{Lemma}

% Hyperref needs to be put at the end
\makeatletter
\let\NAT@parse\undefined
\makeatother
\usepackage[hyphens]{url}
\usepackage[colorlinks]{hyperref}
\usepackage[hyphenbreaks]{breakurl}

\title{\LARGE \bf
Modeling of Rumor Propagation in Large Populations with Network via Graphon Games
}

\author{Huaning Liu and G\"ok{\c c}e Dayan{\i}kl{\i}
\thanks{Department of Statistics,
  University of Illinois at Urbana-Champaign, 
  Champaign, IL 61820, USA 
        {\tt\small huaning3@illinois.edu}}
\thanks{Department of Statistics,
  University of Illinois at Urbana-Champaign, 
  Champaign, IL 61820, USA 
        {\tt\small gokced@illinois.edu}}
}

\begin{document}

\maketitle
\thispagestyle{empty}
\pagestyle{empty}

%%%%%%%%%%%%%%%%%%%%%%%%%%%%%%%%%%%%%%%%%%%%%%%%%%%%%%%%%%%%%%%%%%%%%%%%%%%%%%%%
\begin{abstract}
In this paper, we propose a graphon game model to understand how rumor (such as fake news) propagates in large populations that are interacting on a network and how different policies affect the spread. We extend the SKIR model that is used to model rumor propagation and implement individual controls and weighted interactions with other agents to have controlled dynamics. The agents aim to minimize their own expected costs non-cooperatively. We give the finite player game model and the limiting graphon game model to approximate the Nash equilibrium in the population. We give the graphon game Nash equilibrium as a solution to a continuum of ordinary differential equations (ODEs) and give existence results. Finally, we give a numerical approach and analyze examples where we use piecewise constant graphon.

\end{abstract}

\section{Introduction}\label{sec:introduction}
Rumor propagation is a long-standing research topic that has evolved significantly over the years with the increased use of social media and its contribution to the rapid propagation of fake news. Early models such as the Daley-Kendall (DK) model and the Maki-Thompson (MT) model laid the groundwork by describing rumor spread using probabilistic approaches \cite{daley-kendall, Maki1973MathematicalMA}. Various subsequent finite-state rumor propagation models were inspired by these two fundamental ones, which capture the complexities of rumor dynamics from different perspectives (e.g. \cite{6785909,Bettencourt_2006,XIA2015295}). The field has further branched out into optimal control problems, where the goal is to devise strategies to control the spread of rumors effectively, mostly from the perspective of a regulator or the government \cite{optc,DONG2022112711}.

Psychologists have identified several psychological factors underlying rumor propagation, such as the increased uncertainty, lack of control, anxiety, which can make individuals spread fake news to increase their perceived control \cite{dif_psych}. While calming their negative feelings by propagating news, people are taking risks of acquiring bad reputation. Under the trade-off between personal benefits of sharing rumors and the risk of reputational damage, this behavior can be modeled as a non-cooperative game, where each individual minimizes their own personal costs. In large populations, the challenge lies in tracking interactions among agents, complicating the prediction of aggregate behavior and equilibrium identification. In order to overcome these challenges, mean field game (MFG~\cite{MFGG,huang2006large}) offers a robust method for modeling large-scale interactions in non-cooperative games \cite{MFInfection}. However, it simplifies the analysis by assuming that individuals are indistinguishable from one another and interact with each other symmetrically. These assumptions are not realistic in the context of rumor propagation, since the social network that rumors spread by is inherently asymmetric and the individuals have different personality traits.

We will use \textit{graphon game} to address this shortcoming, motivated by the heterogeneity it offers ~\cite{DBLP:journals/corr/abs-1802-00080, carmona2019stochasticgraphongamesi,caines_graphon,carmona_lq_graphon,bayraktar_graphon}. This framework has been effectively used for finite state models, with application in epidemics \cite{graphon_epidemics}. In the real world, a social network can be viewed as a graphon, where each node represents an individual, and the connections between nodes represent the social interaction level between them. That is, the dynamics of the state of each node (i.e., the individual) are influenced by both their neighbors and their own control. We are interested in understanding how the dynamics of the entire network evolve when a small portion of individuals are \textit{infected} with a rumor at the beginning. Our goal is to analyze the overall dynamics of the group and determine how Nash equilibria are formed under these conditions. Our aim is then to design \textit{policies} that can minimize the spread of \textit{rumors} (such as \textit{fake news}). This approach allows us to capture the complexity of social interactions and the strategic behavior of individuals in response to rumor propagation within a large-scale network. 

Our contributions are three fold. First, inspired by the system dynamics models of opinion dynamics and rumor propagation, we introduce a graphon game model to incorporate the \textit{control} of the individuals and their \textit{interactions} on a network that is modeled by a graphon. Second, we give the (continuum of) forward-backward ordinary differential equation (FBODE) system that deduces the Nash equilibrium in the graphon game and give the existence results. Finally, we present an example with a special graphon (i.e., piecewise constant graphon) and give a numerical algorithm to find the Nash equilibrium to analyze the effects of different policies.

The rest of the paper is organized as follows: In Section~\ref{sec:model}, we introduce the finite-player model of rumor propagation network game and the corresponding graphon game. In Section~\ref{sec:theory}, we provide the main theoretical results i.e., the characterization of Nash equilibrium with FBODEs and the existence result. In Section~\ref{sec:algo-num}, we describe the computational method and provide numerical results for the piecewise constant graphon example.
Finally, in Section~\ref{sec:conclusion-future} we give the concluding remarks and our future work.
\begin{figure}[t]
    \centering
\includegraphics[width=0.5\linewidth]{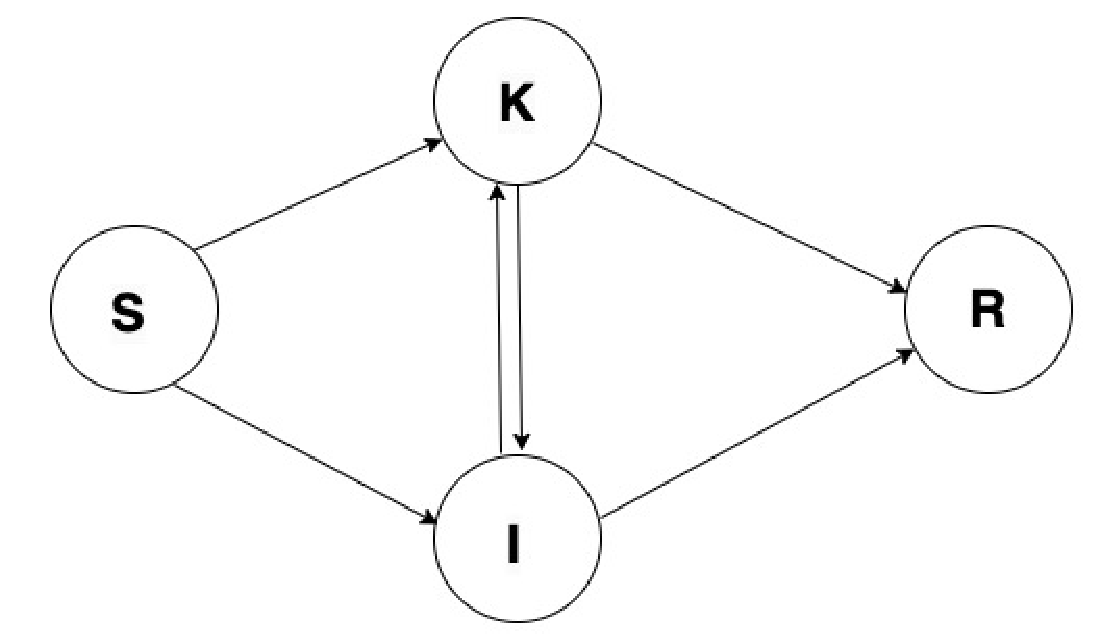}
    \caption{Diagram of the SKIR Model Transitions}
    \label{fig:1}
\end{figure}

\section{Model}
\label{sec:model}

\par{\textbf{States. }}The epidemiological Susceptible-Infected-Removed (SIR) model has been proved a good fit for rumor propagation by representing people with three states: uninformed (S), actively spreading (I), and uninterested (R) \cite{daley-kendall}. We use one of its extension, the Susceptible-Known-Infected-Removed (SKIR) model \cite{xiao}. It introduces anti-rumor information to better capture the dynamics of both rumor and anti-rumor in social networks, where the extra \textit{known} (\textbf{K}) state is actively spreading the truth, and the \textit{infected} (\textbf{I}) state is actively spreading the fake news. 
\par{\textbf{Transitions. }} Figure~\ref{fig:1} gives a diagram of state transitions. First, an uninformed (\textbf{S}) player transits to K or I upon meeting a \textbf{K} or \textbf{I} player. Second, a \textbf{K} or \textbf{I} player can switch to the other state by meeting each other. We denote $\beta$ as the \textit{base} meeting intensity of players. It could be player-specific, state-specific and/or time-dependent i.e. a mapping from players' index set, state set, and/or $[0,T]$ to $\mathbb{R}_+$. In this paper, we will take it as state dependent: $\beta^{\textbf{S}}, \beta^{\textbf{K}}, \beta^{\textbf{I}}$. Third, the transition from state \textbf{K} (resp. \textbf{I}) to \textbf{R}, namely forgetting, happens by an exponentially distributed time with constant rate $\mu_\textbf{K} \in (0,\bar{\mu}]$ (resp. $\mu_\textbf{I}$) for some $\bar{\mu} > 0$.

\subsection{Finite Player Model}
\label{subsec:model_finite}
We first introduce the model with finitely many players, then generalize to the limit case. Let $T > 0$ be the finite time horizon. We denote the set of $A$-valued admissible strategies by $\mathbb{A}$ where $A\subset \RR$\footnote{We assume controls are Markovian, square integrable, and $\RR_+$ valued functions.}, the state of agent $j \in\{1, \ldots, N\}=:\llbracket N \rrbracket$ at time $t$ by $X_t^{j, N}$, and her control by $\alpha_{t}^{j}\in A$. Let the interaction strength between player $i$ and $j$, $i,j \in \llbracket N \rrbracket$ to be $w_{ij}$, where $w$ is a $N$-node undirected dense graph. Then, for example the transition rate for agent $j$ from state \textbf{K} to \textbf{I} is
\small
\begin{equation}
\label{eq:trans_K_I}
  \beta_{\textbf{K}}^{j} \alpha_{t}^{j} Z_{t, \textbf{I}}^{j, N} := \beta_{\textbf{K}}^{j} \alpha_{t}^{j} \frac{1}{N} \sum_{i=1}^N w_{i j} \cdot \alpha_{t}^{i} \cdot \mathbbm{1}_{\textbf{I}}\big(X_{t}^{i,N}\big). 
\end{equation}
\normalsize
This means that agent $j$ transitions from state \textbf{K} to \textbf{I} depending on the base meeting rate of individuals in state \textbf{K}, her own control (i.e., communication rate) $\alpha_t^j$, and the \textit{weighted} average communication rates of individuals in state \textbf{I}. For simplicity in notation, we defined aggregate $Z_{t,\textbf{I}}^{j, N}$ in~\eqref{eq:trans_K_I} and similarly we define $Z_{t,\textbf{K}}^{j, N}:=\frac{1}{N} \sum_{i=1}^N w_{i j} \cdot \alpha_{t}^{i} \cdot \mathbbm{1}_{\textbf{K}}(X_{t}^{i,N})$. We emphasize that the \textit{natural} communication rate is equal to 1 and individuals can choose higher or lower than natural communication rates.\footnote{We emphasize that when we have symmetric interactions (i.e., $w_{ij}=1$ for all $i,j\in \llbracket N \rrbracket$) and when individuals choose the natural communication rate ($\alpha_t^j=1$ for all $t\in[0,T], j \in\llbracket N \rrbracket$), the SKIR model is recovered.} The transition rate matrix for player $j$ is written as follows
\small
\begin{equation*}
Q(\alpha_{t}^{j}, Z_{t, \textbf{K}}^{j, N}, Z_{t, \textbf{I}}^{j, N}) = 
\begin{blockarray}{cccccc}
& \textbf{S} & \textbf{K} & \textbf{I} & \textbf{R} \\
\begin{block}{c(ccccc)}
  \textbf{S} & \cdots & \beta_{\textbf{S}}^{j} \alpha_{t}^{j} Z_{t, \textbf{K}}^{j, N} & \beta_{\textbf{S}}^{j} \alpha_{t}^{j} Z_{t, \textbf{I}}^{j, N} & 0 \\
  \textbf{K} & 0 & \cdots & \beta_{\textbf{K}}^{j} \alpha_{t}^{j} Z_{t, \textbf{I}}^{j, N} & \mu_{\textbf{K}} \\
  \textbf{I} & 0 & \beta_{\textbf{I}}^{j} \alpha_{t}^{j} Z_{t, \textbf{K}}^{j, N} & \cdots & \mu_{\textbf{I}} \\
  \textbf{R} & 0 & 0 & 0 & \cdots \\
\end{block}
\end{blockarray}
\end{equation*}
\normalsize
where notation $\cdots$ represents the negative of the sum of the elements on the same row to satisfy the condition of having the row sum equal to 0. If the rate from state $e_1$ to another state $e_2$, for all $e_1, e_2 \in\{\textbf{S},\textbf{K},\textbf{I}, \textbf{R}\}$ is equal to $q_{12}$, it means that the agent transitions from state $e_1$ to state $e_2$ after an exponentially distributed time with rate $q_{12}$. Here, the heterogeneity of the agents is reflected via the sequence of aggregate variables $\{(Z_{t, \textbf{K}}^{j, N}, Z_{t, \textbf{I}}^{j, N})\}_{j=1,\ldots,N}$.

We then introduce the cost function of the individuals. Consider for player $j \in \llbracket N \rrbracket$, the following running and terminal costs denoted respectively by $f^{j}:[0, T] \times E \times \mathbb{R} \times A \rightarrow \mathbb{R}$ and by $g^{j}: E \times \mathbb{R} \rightarrow \mathbb{R}$, where $E := \{\textbf{S}, \textbf{K}, \textbf{I}, \textbf{R}\}$:
\small
\begin{equation*}
\begin{aligned}
    f^{j}(t, e, z, \alpha)&= \frac{1}{2}(\lambda_{\textbf{I}}(t)-\alpha)^2  \mathbbm{1}_{\{e=\textbf{I}\}}-\lambda_{\textbf{K}}(t)  \mathbbm{1}_{\{e=\textbf{K}\}} +\frac{1}{2}(1-\alpha)^2\\
    g^{j}(e, z)&=c_j  \mathbbm{1}_{\{e = \textbf{I}\}}.
\end{aligned}
\end{equation*}
\normalsize

The motivation for the running cost is as follows: Firstly, despite some researchers suggesting that it might be best from a governmental perspective to maintain a certain ratio of people who know true news and fake news rather than completely eradicating fake news \cite{tani}, various legislations and court cases across countries indicate that the excessive spread of rumors is subject to punishment, For instance, in the United States, it may result in fines, while in China, it could lead to administrative detention~\cite{chinesenews, uslaw}. Therefore, considering the control as the communication rate, when a player is actively spreading fake news, there is a \textit{legal risk} and we penalize the distance between their communication rate and some threshold communication rate denoted by $\lambda_{\textbf{I}}: [0, T] \rightarrow \mathbb{R}$. It could be determined by the government or some regulator for rumor mitigation. Secondly, evidence suggests that avoiding fake news benefits mental health \cite{Alonzo2021-yb}, so when a player knows and spreads true news, a reward is provided instead of a penalty, denoted by $\lambda_{\textbf{K}}: [0, T] \rightarrow \mathbb{R}_{+}$. In addition, recalling that the natural communication rate is $1$, we thus penalize the distance between player's current communication rate and $1$, representing the cost of putting higher or lower levels of effort in communication. The natural communication rate can depend on the individuals' introversion/extroversion levels and therefore can be designed as a function of player index; however, for the sake of simplicity in notation we take it as $1$.
Individuals may also face terminal penalty for ending up spreading fake news; therefore, we penalize agents who are at state \textbf{I} at $T$ with a constant agent-specific $c_j \geq 0$. 

We denote with $\underline{\bsalpha}^N := \{(\bsalpha^1, \ldots, \bsalpha^N)\}$ the control profile of $N$ players. We use $\bsalpha^j=(\alpha_t^j)_{t\in[0,T]}$ and $\bsalpha^{-j}$ to denote agent $j$'s control and control profile of every other agent, respectively. 
Then the objective of agent $j \in \llbracket N \rrbracket$ is to minimize the following expected cost over her control $\bsalpha^j$:

\small
\begin{equation*}
\begin{aligned}
\mathcal{J}^{j, N}(\bsalpha^j; \bsalpha^{-j})=\mathbb{E}\Big[\int_0^T &f^{j}\left(t, X_t^{j, N}, \left(Z_{t, \mathbf{K}}^{j, N}, Z_{t, \mathbf{I}}^{j, N}\right), \alpha^{j}_{t}\right) d t\\
&+g^{j}\left(X_T^{j, N},\left(Z_{T, \mathbf{K}}^{j, N}, Z_{T, \mathbf{I}}^{j, N}\right)\right)\Big].   
\end{aligned}
\end{equation*}
\normalsize

\begin{definition}
    The control profile $\underline{\boldsymbol{\alpha}}^N$ is an $N$-player Nash equilibrium if it is admissible and no player can gain from a unilateral deviation, i.e.,
\small
$$
\mathcal{J}^{j, N}\left(\bsalpha^j ; \bsalpha^{-j}\right) \leq \mathcal{J}^{i, N}\left(\sigma; \bsalpha^{-i}\right), \ \forall i \in \llbracket N\rrbracket \ \forall \sigma \in \mathbb{A}.
$$
\normalsize
\end{definition}

\subsection{Graphon Game Model}
\label{subsec:model_graphon}

When the number of players is very large ($N \rightarrow \infty$), we introduce the graphon game model to approximate the Nash equilibrium. The agent set is now a continuum of non-identical agents, namely $I:=[0,1]$. For some agent $x \in I$, the $E$-valued jump process $\left(X_t^{\bsalpha,x}\right)_{t \in[0, T]}$ denotes her state trajectory, which is potentially influenced by the whole strategy profile ${\bsalpha} := \left(\boldsymbol{\alpha}^x\right)_{x \in I}$ via players' interactions on graphon.  Formally, a graphon is a symmetric Borel-measurable function, $w: I \times I \rightarrow[0,1]$, where $w(x,y)$ represents the connection strength between agents $x$ and $y$. Intuitively, graphon represents the limit of a dense graph when the number of nodes goes to infinity.

Following the graphon game literature, a general aggregate for agent $x \in I$, induced by the graphon, can be written as follows:
\small
\begin{equation}
    Z_t^{{\bsalpha}, x}=\int_I w(x, y) \mathbb{E}\left[K\left(\alpha_t^y, X_{t}^{{\bsalpha}, y}\right)\right] d y,
\end{equation}
\normalsize
where $K(\alpha,x)$ is an interaction function.
Under the case of rumors propagation introduced in \ref{subsec:model_finite}, the aggregate variables for agent $x\in I$ in the limit can be specified as 

\small
\begin{equation}\left\{\begin{array}{l}Z_{t, \textbf{I}}^{{\bsalpha}, x}=\int_I w(x, y) \int_{A} \alpha_t^y \rho_t^y(da, \textbf{I}) d y, \\[1mm] Z_{t, \textbf{K}}^{{\bsalpha}, x}=\int_I w(x, y) \int_{A} \alpha_t^y \rho_t^y(da, \textbf{K}) d y.\end{array}\right.\label{eq:(2)}\end{equation}
\normalsize
These aggregates intuitively give the weighted communication rate of individuals in state \textbf{I} and \textbf{K}, respectively.

The Q-matrix form stays functionally the same, where the finite player aggregates are replaced with the limit aggregates given in equation~\eqref{eq:(2)}. We denote the agents' initial state distribution by $p_{0}^{x}$, the dynamics of the state can be written in a compact form as follows
\small
\begin{equation}
    \frac{d}{d t} p^{{\boldsymbol{\alpha}}, x}(t)=p^{{\boldsymbol{\alpha}}, x}(t) Q^x\left(\alpha_t^x, Z_{t, \textbf{K} }^{{\boldsymbol{\alpha}}, x}, Z_{t, \textbf{I}}^{{\boldsymbol{\alpha}} ,x}\right),
\end{equation}
\normalsize
where $p^{{\boldsymbol{\alpha}}, x}(t):=\left(p^{{\boldsymbol{\alpha}}, x}(t, e)\right)_{e \in E}$ with initial condition $p^{{\boldsymbol{\alpha}}, x}(0)=p_0^x$. The entry of Q-matrix from state $e$ to $e^{\prime}$ for agent $x$ at time $t$ when the agent uses control $\alpha$ will be denoted as $q_t^x(e, e^{\prime}, Z_{t, \textbf{K} }^{{\boldsymbol{\alpha}}, x}, Z_{t, \textbf{I}}^{{\boldsymbol{\alpha}} ,x}, \alpha)$.

The expected cost for agent $x \in I$ with control $\boldsymbol{\sigma} \in 
\mathbb{A}$ while the population chooses $\bsalpha$ is
\small
\begin{equation*}
\begin{aligned}
    \mathcal{J}^x(\boldsymbol{\sigma} ; (\boldsymbol{Z}^{\boldsymbol{\alpha}, x})_{\textbf{K}, \textbf{I}})=\mathbb{E}\Big[\int_0^T &f^x\left(t, X_t^{{\boldsymbol{\alpha}}, x}, Z_{t,\textbf{K}}^{{\boldsymbol{\alpha}}, x}, Z_{t,\textbf{I}}^{{\boldsymbol{\alpha}}, x}, \sigma_t\right) d t\\
    &+g^x\left(X_T^{{\boldsymbol{\alpha}}, x}, Z_{T, \textbf{K}}^{{\boldsymbol{\alpha}}, x}, Z_{T, \textbf{I}}^{{\boldsymbol{\alpha}}, x}\right)\Big].
    \end{aligned}
\end{equation*}
\normalsize
For simplicity, we define $(\bZ^{\bsalpha,x})_{\textbf{K}, \textbf{I}} := (Z_{t, \textbf{K}}^{\bsalpha, x}, Z_{t, \textbf{I}}^{\bsalpha, x})_{t\in[0,T]}$.
\begin{definition}
    The strategy profile ${\bsalpha}$ with $\bsalpha^x\in \bbA$ is a graphon game Nash equilibrium if no player can gain from a unilateral deviation, i.e.,
    \small
\begin{equation*}
\mathcal{J}^x\left(\boldsymbol{\alpha}^x ; (\boldsymbol{Z}^{\boldsymbol{\alpha}, x})_{\textbf{K}, \textbf{I}}\right) \leq \mathcal{J}^x\left(\boldsymbol{\sigma} ; (\boldsymbol{Z}^{\boldsymbol{\alpha}, x})_{\textbf{K}, \textbf{I}}\right), \quad \forall x \in I, \forall \boldsymbol{\sigma} \in \mathbb{A}.    
\end{equation*}
\normalsize
\end{definition}

\section{Main Theoretical Results}
\label{sec:theory}

In this section, we characterize the graphon game Nash equilibrium as a solution of a continuum of FBODE system. Furthermore, we give the existence results. We start by stating our assumptions.

\begin{assumption}
\label{assu:fbode_nash}
\begin{itemize}
    \item[(i.)] We assume $\lambda_{\mathbf{I}}(t)$ and $\lambda_{\mathbf{K}}(t)$ are continuous functions of $t$. We also assume $\alpha_t^j \in[0, \bar{A}]$ for all $t \in[0, T]$ and $j \in \llbracket N \rrbracket$ where $\bar{A}$ denotes an upper bound on the communication rate.
    \item[(ii.)] We assume further $\lambda_{\mathbf{I}}(t)$ 
 is a lipschitz continuous function of $t$ with lipschitz constant $L_{\lambda_{\bI}}$, and $|\lambda_{\textbf{K}}(t)|$ and $|\lambda_{\textbf{I}}(t)|$ are uniformly bounded with bounds $\bar{\lambda}_{\bK} > 0$ and  $\bar{\lambda}_{\bI} > 0$, respectively. Furthermore, $c^x$ is bounded by $\bar{c} > 0$ for all $x \in I$. Finally, meeting intensities $\beta_{\bS}$, $\beta_{\bK}$ and $\beta_{\bI}$ are bounded by some $\bar{\beta} > 0$.
\end{itemize}
\end{assumption}

\begin{theorem}
\label{the:fbode_nash}

Under Assumption~\ref{assu:fbode_nash}.(i), the graphon game Nash equilibrium control profile is written as $\hat{\alpha}^x_t = \hat{\alpha}^x(t,e,(z)_{\textbf{K},\textbf{I}}, u^x(t,\cdot))=:\hat{\phi}^x(t,e)$ for all $x\in I, t\in[0,T]$ where 
\small
\begin{equation}
    \begin{aligned}
    \hat{\phi}^x(t,\textbf{S})=&1+\beta_{\textbf{S}} Z_{t, \textbf{K}}^x\left(u^x(t, \textbf{S})-u^x(t, \textbf{K})\right)\\
    &\hskip1.7mm+\beta_{\textbf{S}} Z_{t,\textbf{I}}^x\left(u^x(t, \textbf{S})-u^x(t, \textbf{I})\right), \\
    \hat{\phi}^x(t, \textbf{K})=&1+\beta_{\textbf{K}} Z_{t, \textbf{I}}^x\left(u^x(t, \textbf{K})-u^x(t, \textbf{I})\right), \\
    \hat{\phi}^x(t,\textbf{I})=&\frac{1}{2}\left(\lambda_{\textbf{I}}(t)+1+\beta_{\textbf{I}} Z^{x}_{t,\textbf{K}}\left(u^x(t, \textbf{I})-u^x(t, \textbf{K})\right)\right), \\
    \hat{\phi}^x(t,\textbf{R}) =& 1,
    \end{aligned}
\end{equation}
\normalsize
if the couple $(\bu,\bp)$ solves the following FBODE system:
\small
\begin{equation*}
    \begin{aligned}
        \dot{p}^x(t, e)=&\sum_{e^{\prime}\in E} p^x(t, \cdot) q^x_t(e^{\prime}, e, Z_{t, \mathbf{K}}^{\boldsymbol{\alpha}, x}, Z_{t, \mathbf{I}}^{\boldsymbol{\alpha}, x}, \hat{\phi}^x(t,\cdot)), e \in\{\textbf{S}, \textbf{K}, \textbf{I}, \textbf{R}\} \\
    \dot{u}^x(t, \textbf{S})=&\beta_{\textbf{S}} \hat{\phi}^x(t, \textbf{S}) Z^{\boldsymbol{\alpha},x}_{t, \textbf{K}}\left(u^x(t, \textbf{S})-u^x(t, \textbf{K})\right) -\frac{1}{2}\big(1-\hat{\phi}^x(t, \textbf{S})\big)^2 \\
        &+\beta_{\textbf{S}} \hat{\phi}^x(t, \textbf{S}) Z^{\boldsymbol{\alpha},x}_{t, \textbf{I}}\left(u^x(t, \textbf{S})-u^x(t, \textbf{I})\right)\\
    \dot{u}^x(t, \textbf{K})=&\beta_{\textbf{K}} \widehat{\phi}^x(t, \textbf{K}) Z_{t, \mathbf{I}}^{\boldsymbol{\alpha}, x} \left(u^x(t, \textbf{K})-u^x(t, \textbf{I})\right) \\
    &+ \mu_{\textbf{K}}\left(u^x(t, \textbf{K})-u^x(t, \textbf{R})\right) + \lambda_{\textbf{K}}(t)-\frac{1}{2}\big(1-\hat{\phi}^{x}(t, \textbf{K})\big)^2 \\
 \dot{u}^x(t, \textbf{I})=&\beta_{\textbf{I}} \hat{\phi}^x(t, \textbf{I}) Z^{\boldsymbol{\alpha},x}_{t, \textbf{K}} \left(u^x(t, \textbf{I})-u^x(t, \textbf{K})\right) \\
 &+ \mu_{\textbf{I}}\left(u^x(t, \textbf{I})-u^x(t, \textbf{R})\right) -\frac{1}{2}\left(1-\hat{\phi}^x(t, \textbf{I})\right)^2 \\&- \frac{1}{2}\left(\lambda_{\textbf{I}}(t)-\hat{\phi}^{\hat{x}}(t, \textbf{I})\right)^2 \\
    \dot{u}^x(t, \textbf{R})=&0 \\
    Z_{t, \textbf{I}}^{{\bsalpha}, x}=&\int_I w(x, y) \hat{\phi}^y(t,\textbf{I}) p^y(t, \textbf{I}) d y\\
        Z_{t, \textbf{K}}^{{\bsalpha}, x}=&\int_I w(x, y) \hat{\phi}^y(t,\textbf{K}) p^y(t, \textbf{K}) d y\\
     u^x(T, e)=&0,\ \forall e \in\{\textbf{S}, \textbf{K}, \textbf{R}\}, u^x(T, \textbf{I})=c^x,\\  
     p^x(0, e)=&p_0^x(e),\ e \in\{\textbf{S}, \textbf{K}, \textbf{I}, \textbf{R}\}, \quad\forall x\in I.
    \end{aligned}
\end{equation*}
\normalsize

\end{theorem}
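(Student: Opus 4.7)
The plan is to proceed in three stages: (i) fix the population strategy profile $\bsalpha$ and treat the representative agent $x \in I$ as facing a standard continuous-time finite-state controlled Markov chain problem with exogenous time-dependent aggregates $(Z_{t,\textbf{K}}^{\bsalpha,x}, Z_{t,\textbf{I}}^{\bsalpha,x})_{t \in [0,T]}$; (ii) minimize the Hamiltonian pointwise in $\alpha$ state by state to obtain the closed-form optimizers $\hat\phi^x(t,\cdot)$; and (iii) close the system by imposing the equilibrium consistency condition, which produces the forward Kolmogorov ODE for $p^x$ together with the integral formulas for the aggregates.

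For stage (i) I would introduce the individual value function
\begin{equation*}
u^x(t,e) := \inf_{\boldsymbol{\sigma} \in \mathbb{A}} \E\Bigl[\int_t^T f^x\bigl(s, X_s^{x,\boldsymbol{\sigma}}, Z_{s,\textbf{K}}^{\bsalpha,x}, Z_{s,\textbf{I}}^{\bsalpha,x}, \sigma_s\bigr)\, ds + g^x\bigl(X_T^{x,\boldsymbol{\sigma}}, \cdot\bigr) \,\Big|\, X_t^{x,\boldsymbol{\sigma}} = e\Bigr]
\end{equation*}
and invoke the dynamic programming principle for a controlled CTMC on the finite set $E$ with deterministic, time-varying, bounded coefficients, giving the HJB system
\begin{equation*}
-\dot u^x(t,e) = \inf_{\alpha \in A}\Bigl\{ f^x(t,e,z,\alpha) + \sum_{e' \ne e} q_t^x(e,e',z,\alpha)\bigl(u^x(t,e') - u^x(t,e)\bigr)\Bigr\},
\end{equation*}
with $z = (Z_{t,\textbf{K}}^{\bsalpha,x}, Z_{t,\textbf{I}}^{\bsalpha,x})$ and terminal data $u^x(T,\textbf{I}) = c^x$, $u^x(T,e)=0$ for $e \in \{\textbf{S},\textbf{K},\textbf{R}\}$, which is exactly the stated terminal condition for $u^x$.

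For stage (ii), in each state $e \in E$ the right-hand side above is a strictly convex quadratic in $\alpha$: the coefficient of $\alpha^2$ is $\tfrac{1}{2}$ in states $\textbf{S}, \textbf{K}, \textbf{R}$ (from $\tfrac{1}{2}(1-\alpha)^2$) and $1$ in state $\textbf{I}$ (because of the additional $\tfrac{1}{2}(\lambda_{\textbf{I}}(t)-\alpha)^2$), and the linear-in-$\alpha$ part collects the outgoing transition rates weighted by the value differences $u^x(t,e')-u^x(t,e)$. The first-order condition therefore has a unique unconstrained solution in each state, and direct computation produces the four formulas for $\hat\phi^x(t,\textbf{S}), \hat\phi^x(t,\textbf{K}), \hat\phi^x(t,\textbf{I})$, and $\hat\phi^x(t,\textbf{R}) = 1$ stated in the theorem. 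Substituting $\alpha = \hat\phi^x(t,e)$ back into the HJB and collecting terms yields the listed backward ODEs for $u^x$; the residual terms of the form $-\tfrac{1}{2}(1-\hat\phi^x(t,e))^2$ and $-\tfrac{1}{2}(\lambda_{\textbf{I}}(t)-\hat\phi^x(t,\textbf{I}))^2$ are what is left of the running cost after the first-order condition has been used to eliminate the linear cross terms. For stage (iii), the equilibrium forward ODE $\dot p^x(t,\cdot) = p^x(t,\cdot) Q^x(\hat\phi^x(t,\cdot), Z_{t,\textbf{K}}^{\bsalpha,x}, Z_{t,\textbf{I}}^{\bsalpha,x})$ is the standard Kolmogorov equation for the CTMC driven by the equilibrium Q-matrix, and the integral expressions for $Z_{t,\textbf{K}}^{\bsalpha,x}$ and $Z_{t,\textbf{I}}^{\bsalpha,x}$ follow by substituting $\alpha_t^y = \hat\phi^y(t, X_t^{\bsalpha,y})$ into \eqref{eq:(2)} and using $\int_A a\,\rho_t^y(da,e) = \hat\phi^y(t,e)\,p^y(t,e)$.

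The main obstacle is that the formulas above are the \emph{unconstrained} first-order minimizers, while Assumption~\ref{assu:fbode_nash}(i) requires $\alpha \in A = [0,\bar A]$, so a priori $\hat\phi^x(t,e)$ need not lie in $A$. To handle this rigorously I would first derive an a priori bound $|u^x(t,e)| \le C$ with $C$ depending only on $T, \bar\lambda_{\textbf{K}}, \bar\lambda_{\textbf{I}}, \bar c, \bar\beta, \bar A$ via a Gronwall-type estimate applied to the HJB, and then either (a) choose $\bar A$ large enough that this bound together with the bounded aggregates $Z_{t,\textbf{K}}^{x}, Z_{t,\textbf{I}}^{x}$ forces $\hat\phi^x(t,e) \in [0,\bar A]$ for all $(t,e,x)$, so that the constrained and unconstrained optimizers coincide, or (b) replace $\hat\phi^x$ by its projection onto $[0,\bar A]$. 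Route (a) is the cleaner one and is what I would adopt. A secondary technical point is the validity of the DPP in this graphon setting, but because the aggregates are treated as fixed exogenous data during each individual optimization, this reduces to the classical verification theorem for CTMCs on a finite state space, which is well known under the boundedness provided by Assumption~\ref{assu:fbode_nash}(i).
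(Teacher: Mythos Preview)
Your proposal is correct and follows essentially the same approach as the paper: write the Hamiltonian for each agent $x$, minimize it pointwise in $\alpha$ to obtain $\hat\phi^x(t,e)$, and couple the resulting HJB with the Kolmogorov forward equation and the aggregate consistency conditions. The paper's proof is in fact considerably sketchier than yours---it simply cites the finite-state HJB/KFP framework and asserts that the equilibrium control minimizes the Hamiltonian, without writing out the quadratic first-order conditions or addressing the box constraint $\alpha\in[0,\bar A]$ that you flag; your discussion of that constraint (and of why the unconstrained minimizer should lie in $A$ under suitable a priori bounds) is a genuine addition, not a deviation.
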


\begin{proof}
Extending results given in~\cite[Section 7.2]{carmona2018probabilistic} and following similarly to~\cite{graphon_epidemics}, we can write the finite state versions of the Hamilton-Jacobi-Bellman (HJB) and Kolmogorov-Fokker-Planck (KFP) equations. We first start by finding the writing the Hamiltonian for each agent $x$ for all $x\in I$:
\small
\begin{equation*}
\begin{aligned}
    &H^x(t, e, (z)_{\textbf{K},\textbf{I}}, u, \alpha) \\ &\hskip5mm=\sum_{e^\prime\in E} q_t(e, e^{\prime}, (z)_{\textbf{K},\textbf{I}}, \alpha)u(e^{\prime}) + f^x(t, e, (z)_{\textbf{K},\textbf{I}}, \alpha)
\end{aligned}
\end{equation*}
\normalsize
where $q_t(e, e^{\prime}, (z)_{\textbf{K},\textbf{I}}, \alpha)$ is the element of Q-matrix that gives the transition rate from state $e$ to $e^\prime$. The equilibrium control is denoted by $\hat{\phi}^x(t,e):=\hat{\alpha}^x(t,e,(z)_{\textbf{K},\textbf{I}}, u^x(t,\cdot))$ is found by minimizing the Hamiltonian and plugging in $u^x(t,\cdot)$ instead of $u(\cdot)$, for all $t\in[0,T], x\in I$. Then, the HJB can be written as
\small
\begin{equation*}
    \dot{u}^x(t, e) = - H(t, e, (z)_{\textbf{K},\textbf{I}}, u^x(t, \cdot), \hat{\phi}^x(t,e)), \ t\in[0,T], e\in E.
\end{equation*}
\normalsize
Normally, HJB is a partial differential equation; however, since the state space is finite, it appears as an ODE system. Here the function $u^x(t,e)$ is the value function of agent $x$:
\small
\begin{equation*}
\begin{aligned}
    u^x(t,e)= \inf_{\bsalpha^x \in \bbA} \EE \Big[\int_t^T &f^x\left(s, X_s^{{\boldsymbol{\alpha}}, x}, Z_{s,\textbf{K}}^{{\boldsymbol{\alpha}}, x}, Z_{s,\textbf{I}}^{{\boldsymbol{\alpha}}, x}, \alpha^x_s\right) d s\\
    &+g^x\left(X_T^{{\boldsymbol{\alpha}}, x}, Z_{T, \textbf{K}}^{{\boldsymbol{\alpha}}, x}, Z_{T, \textbf{I}}^{{\boldsymbol{\alpha}}, x}\right)\Big| X^x_t = e\Big].
\end{aligned}
\end{equation*}
\normalsize
At the equilibrium, the HJB system will be coupled with a KFP system that represents the state dynamics and is written as follows by plugging in the equilibrium control
\small
\begin{equation*}
    \dot{p}^x(t, e) = \sum_{e^{\prime} \in E} q_t(e^{\prime}, e, (z)_{\textbf{K},\textbf{I}}, \hat{\phi}^x(t, e^{\prime})) p^x(t,e^{\prime}).
\end{equation*}
\normalsize

\end{proof}

We emphasize that the FBODE system consists of a \textit{continuum} of \textit{coupled} FBODEs. Our next step is to show the existence of the solution for the FBODE system which in turn gives a graphon Nash equilibrium.
\begin{theorem}[Existence of Nash]
    \label{existencetheorem}
    Under Assumption~\ref{assu:fbode_nash}%, if
    \small
    \begin{equation*}
        \text{\normalsize if    } T\bar{\beta}\left(\max \left(\frac{1}{2}\left(\bar{\lambda}_{\mathbf{I}}+\bar{A}\right)^2, \bar{\lambda}_{\mathbf{K}}\right)+\frac{1}{2}(1+\bar{A})^2\right) < 1,
    \end{equation*}
    \normalsize
    there exists a bounded solution $(\bu,\bp)$ to the continuum of forward backward KFP-HJB system given in Theorem~\ref{the:fbode_nash}.
\end{theorem}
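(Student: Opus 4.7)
The plan is to recast the continuum FBODE system as a fixed point problem for the graphon-induced aggregate profile $\bZ=(Z^{x}_{t,\bK},Z^{x}_{t,\bI})_{x\in I,\,t\in[0,T]}$ and apply Banach's contraction mapping theorem, with the smallness hypothesis serving as the quantitative contraction estimate. Let $\mathcal B$ denote the closed convex subset of $L^{\infty}(I\times[0,T];\RR^{2})$ of measurable aggregates with values in $[0,\bar A]^{2}$, equipped with the essential-supremum norm. Given $\bZ\in\mathcal B$, I would: (i) solve the backward HJB ODE of Theorem~\ref{the:fbode_nash} pointwise in $x$ with terminal condition $u^{x}(T,e)=c^{x}\mathbbm{1}_{\{e=\bI\}}$, substituting the frozen $\bZ$ in the coefficients; (ii) read off $\hat\phi^{x}(t,e)$ from the explicit minimizer formulas; (iii) solve the forward linear KFP ODE for $p^{x}$ starting from $p_{0}^{x}$; (iv) define
\[
\Phi(\bZ)^{x}_{t}:=\Bigl(\int_{I}w(x,y)\hat\phi^{y}(t,\bK)p^{y}(t,\bK)\,dy,\ \int_{I}w(x,y)\hat\phi^{y}(t,\bI)p^{y}(t,\bI)\,dy\Bigr).
\]
Each of the four ODE steps is standard (Lipschitz coefficients, KFP linear and simplex-preserving), and measurability in $x$ is propagated through the graphon integrals.

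For the invariance $\Phi(\mathcal B)\subset\mathcal B$, a backward Gronwall on the HJB using $|f^{x}|\leq C_{f}:=\max\bigl(\tfrac12(\bddlambdaI+\bar A)^{2},\bddlambdaK\bigr)+\tfrac12(1+\bar A)^{2}$ and $|g^{x}|\leq\bar c$ yields a uniform a priori bound on $\|u^{x}\|_{\infty}$, hence on $\|\hat\phi^{x}\|_{\infty}$ through the explicit formulas; combined with Assumption~\ref{assu:fbode_nash}(i) (in particular $\bar A$ large enough that the unconstrained minimizers stay in $[0,\bar A]$) and $w\leq 1$, $p^{y}(t,\cdot)$ a probability vector, we obtain $\Phi(\bZ)^{x}_{t}\in[0,\bar A]^{2}$.

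For the contraction, take $\bZ^{(1)},\bZ^{(2)}\in\mathcal B$ and the corresponding $(u^{(i)},\hat\phi^{(i)},p^{(i)})$, $i=1,2$. A backward Gronwall on $\Delta u:=u^{(1)}-u^{(2)}$ and a forward Gronwall on $\Delta p:=p^{(1)}-p^{(2)}$, exploiting the Lipschitz dependence of the HJB and KFP coefficients on $\bZ$ (Lipschitz constants scaling with $\bar\beta$ times the a priori bounds on $u$), give linear estimates $\|\Delta u\|_{\infty},\|\Delta p\|_{\infty}\leq C(T)\|\bZ^{(1)}-\bZ^{(2)}\|_{\infty}$. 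Writing $\hat\phi^{(1)}p^{(1)}-\hat\phi^{(2)}p^{(2)}=\hat\phi^{(1)}\Delta p+\Delta\hat\phi\,p^{(2)}$ and using $|w|\leq 1$ yields
\[
\|\Phi(\bZ^{(1)})-\Phi(\bZ^{(2)})\|_{\infty}\leq\kappa(T)\|\bZ^{(1)}-\bZ^{(2)}\|_{\infty},
\]
where the leading-order term in $\kappa(T)$ is precisely $T\bar\beta\bigl(\max(\tfrac12(\bddlambdaI+\bar A)^{2},\bddlambdaK)+\tfrac12(1+\bar A)^{2}\bigr)$. Under the smallness hypothesis $\kappa(T)<1$, Banach's theorem yields a unique fixed point $\bZ^{\ast}\in\mathcal B$, and the associated $(u^{x},p^{x})$ solve the full continuum FBODE system with the required uniform boundedness.

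The main obstacle is the bookkeeping in Step~3: matching the leading constant in $\kappa(T)$ to the specific expression in the hypothesis. The HJB right-hand side is quadratic in $\hat\phi^{x}$, which is itself affine in $(u^{x},\bZ)$, so a naive Gronwall argument produces cross terms that scale with $\bar c$, $\bar\mu$, or $e^{CT}$ and do not appear in the stated condition. To isolate exactly the constant $T\bar\beta(\max(\tfrac12(\bddlambdaI+\bar A)^{2},\bddlambdaK)+\tfrac12(1+\bar A)^{2})$, I expect that a Bielecki-type weighted norm $\|\cdot\|_{e^{-\gamma(T-t)}}$ on both $u$ and $\bZ$ is needed to absorb the sub-leading contributions, so that contraction reduces to exactly the hypothesis of the theorem without any extra smallness on the terminal data or the forgetting rates.
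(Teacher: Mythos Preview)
Your proposal takes a genuinely different route from the paper. You set up a \emph{single} Banach fixed-point iteration on the aggregate profile $\bZ$, recomputing $(u,p)$ from scratch at each step by solving the HJB and then the KFP. The paper instead runs a \emph{two-layer} scheme: an inner Banach step on the aggregate map $\Phi^{(u,p)}$ with $(u,p)$ held \emph{fixed}, followed by an outer Schauder fixed-point step on the map $(u,p)\mapsto(\hat u,\hat p)$.

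The point of the paper's decomposition is that the smallness hypothesis is spent exactly where it is cheap. With $(u,p)$ frozen, the only $\bZ$-dependence of $\Phi^{(u,p)}(\bZ)$ enters through the explicit minimizer formulas for $\hat\phi$, which are affine in $\bZ$ with slope bounded by $\bar\beta$ times the a~priori bound on the value function; integrating against $p^y\leq 1$ and $w\leq 1$ yields the contraction constant $L_{\hat a}(\bar A,T)=T\bar\beta L_f(\bar A)$ in one line, with no Gronwall at all. The outer Schauder step then needs only compactness (equicontinuity from the uniform ODE bounds, then Arzel\`a--Ascoli) and sequential continuity of $(u,p)\mapsto(\hat Z,\hat p,\hat u)$, neither of which requires any smallness. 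So the hypothesis of the theorem appears precisely and only as the inner contraction constant.

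In your one-layer scheme, by contrast, the map $\bZ\mapsto\Phi(\bZ)$ factors through solving both the backward HJB and the forward KFP, and the contraction estimate must absorb the Gronwall factors from each, together with their coupling through $\hat\phi$. You identify this obstacle correctly, but the proposed Bielecki remedy is speculative here: the system is genuinely forward--backward, so no single exponential weight simultaneously tames $\Delta u$ and $\Delta p$, and the pointwise-in-$t$ contribution $\partial_{\bZ}\hat\phi\cdot\Delta\bZ$ is unaffected by any weight. What your route would naturally deliver is existence (and even uniqueness, which the paper does not claim) under a \emph{stronger} smallness condition that also picks up $\bar c$, $\bar\mu$, and exponential-in-$T$ factors. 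If the goal is to land on exactly the stated condition, the Banach-then-Schauder split is the missing structural idea.
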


\begin{proof}
 The proof idea is similar to that of the existence theorem given in~\cite[Theorem 2]{graphon_epidemics}, differed by i) current setting requires an extension due to the model form with two aggregate variables, ii) a specific model of interest is analyzed, enabling a more refined conclusion. We give a sketch of proof in the main text.\footnote{The full proof can be found in Appendix~\ref{app:exproofdetail}.} We first show there exist unique processes $(Z_{t, \textbf{K}}^{x,(u,p)},Z_{t, \textbf{I}}^{x,(u,p)})_{t\in[0,T], x\in I}$ given fixed processes $(\bu, \bp) = (\bu^x, \bp^x)_{x\in I}$ by using Banach fixed point theorem. Second, we show that the mapping $(\bu, \bp) \mapsto (\hat{\bu}, \hat{\bp})$ where the mapping is defined by using the FBODE system in Theorem~\ref{the:fbode_nash} and the corresponding aggregates $(Z_{t, \textbf{K}}^{x,(u,p)},Z_{t, \textbf{I}}^{x,(u,p)})_{t\in[0,T], x\in I}$ (which is known to exist uniquely due to the first step) has a fixed point by using Schauder fixed point theorem. 
 The Banach fixed point theorem step will be extended as follows:

With solution space for the system properly defined, fix a pair $(u,p)$ in it, the aggregate mapping follows
\small
    \begin{equation*}
        \begin{aligned} & \Phi^{(u, p)}\left(\left(Z_{t, \textbf{K}}^x, Z_{t, \textbf{I}}^x\right)_{t \in [0, T], x \in I}\right)= \\ & \left( \int_I w(x, y) \hat{\phi}^y(t, \textbf{I}) p^y(t, \textbf{I}) d y, \int_I w(x, y) \hat{\phi}^y(t, \textbf{K}) p^y(t, \textbf{K}) d y\right)\end{aligned}
    \end{equation*}
    \normalsize
    We then prove the existence and uniqueness of fixed point for this functional when $\left(Z_{t, \textbf{K}}^x, Z_{t, \textbf{I}}^x\right)$ depends continuously on $(u,p)$ under some properly chosen aggregate space. Define the aggregate space 
    \small
    \begin{align*}
        \mathcal{Z}:=\{f \in C([0, T] ; &L^2(I ; \mathbb{R} \otimes \mathbb{R})): |f_t^x|_{[1]} \leq \bar{A}, |f_t^x|_{[2]} \leq \bar{A}, \\ &
        t \in[0, T], a.e. x \in I\}.
    \end{align*}
    \normalsize
    Given the 2-dimensional functional case, let 
    \small
    $$\|f\|_{\mathcal{Z}}:=\sup _{t \in[0, T]} \int_I|f(t)(x)| dx \quad \text{where $|\cdot|$ is 1-norm}.$$ 
    \normalsize
    One could easily check that $\|\cdot\|_{\mathcal{Z}}$ is a well-defined norm and $(\mathcal{Z}, \|\cdot\|_{\mathcal{Z}})$ is a closed subset of a Banach space; therefore, it is complete.
    Then, we prove $\Phi$ is a contraction mapping under the assumptions of Theorem~\ref{existencetheorem}, then we can apply Banach fixed point theorem. The remaining part that uses Schauder fixed point theorem is similar to ~\cite{graphon_epidemics}.
\end{proof}

\section{Example and Numerical Results} 
\label{sec:algo-num}

\noindent{\textbf{Example: Piecewise Constant Graphon. }} 
To compute the Nash equilibrium of the system, as an example we consider a setting with piecewise constant graphon. For nonnegative $m^{1}, \ldots, m^{K}$, player index $[0,1]$ was divided into $K$ intervals. We assume the players in the same group to be indistinguishable with the same model parameters and interaction strengths i.e., for $x, \tilde{x}$ from the same group, $\forall y \in I, w(x, y) = w(\tilde{x}, y)$. This regime enables the FBODE system to be finite dimensional. With discretized time, the algorithm iteratively updates the aggregates, value function, and distribution flows, $u$ and $p$, respectively, see Alg.~\ref{algo:pccgraphon}.

{
\begin{algorithm}
\caption{\small Piecewise Constant Graphon Game\label{algo:pccgraphon}}

\footnotesize
\textbf{Input:} Initial flows $(u,p)^{(0)}$, terminal time $T$, graphon $w$, other coefficients of the model.

\textbf{Output:} converged $(u^{*}, p^{*})$, equilibrium aggregates $(Z_{\textbf{K}}, Z_{\textbf{I}})$, equilibrium communication rates $\phi(t, \cdot)$.

\vskip1mm

\begin{algorithmic}[1]
\WHILE{{$\|u^{(k)}-u^{(k-1)}\| > \epsilon$} or {$\|p^{(k)}-p^{(k-1)}\| > \epsilon$} (at step k)}
    \STATE Compute aggregate $(Z_{\textbf{K}}, Z_\textbf{I})^{(k)}$ based on graphon
    \STATE Compute optimal control $\hat{\phi}^x(t, e)^{(k)}$
    \STATE Solve Kolmogorov-Fokker–Planck equation to achieve $p^{(k+1)}$
    \STATE Solve Hamilton–Jacobi–Bellman equation to achieve $u^{(k+1)}$
    \STATE Save variables in step 2-5 for the next iteration
\ENDWHILE

\RETURN $(u^{*}, p^{*}), (Z_{\textbf{K}}, Z_{\textbf{I}}), \phi(t, \cdot)$
\end{algorithmic}
\end{algorithm}}

\noindent{\textbf{Numerical Results.}}
We then provide some numerical results for an example inspired with real world data and parameters. We first study the effects of different policies on the rumor propagation in social media under the age groups. We divided population into 4 groups by age and set different base meeting intensity and forgetting rates for each group. Piecewise constant graphon weights and model parameters can be seen in tables~\ref{tab:e1graphon} and~\ref{tab:e1param}.
\small
\begin{table}[ht]
\centering
\begin{tabular}{|c|c|c|c|c|}
\hline
Age & 18-29 & 30-49 & 50-64 & 65+ \\ \hline
18-29 & 1 & 0.9 & 0.8 & 0.7 \\ \hline
30-49 & 0.9 & 0.9 & 0.8 & 0.8 \\ \hline
50-64 & 0.8 & 0.8 & 0.9 & 0.8 \\ \hline
65+ & 0.7 & 0.8 & 0.8 & 0.8 \\ \hline
\end{tabular}
\caption{\small Experiment 1 Graphon}
\label{tab:e1graphon}
\end{table}
\normalsize
\small
\begin{table}[ht]
\centering
\begin{tabular}{|c|c|c|c|c|c|}
\hline
Age & $\beta_{\textbf{S}}$ & $\beta_{\textbf{K}}$ & $\beta_{\textbf{I}}$ & $\mu_{\textbf{K}}$ & $\mu_{\textbf{I}}$ \\ \hline
18-29 & 0.4 & 0.5 & 0.75 & 0.1 & 0.1 \\ \hline
30-49 & 0.3 & 0.42 & 0.62 & 0.05 & 0.05 \\ \hline
50-64 & 0.3 & 0.32 & 0.48 & 0.05 & 0.05 \\ \hline
65+ & 0.3 & 0.2 & 0.3 & 0.15 & 0.15 \\ \hline
\end{tabular}
\caption{\small Experiment 1 Coefficients}
\label{tab:e1param}
\end{table}
\normalsize
The graphon is chosen based on the age social activity levels determined in \cite{graphon_epidemics}. Furthermore, it is natural to assume that the base meeting intensity is higher among younger individuals and the forgetting rate tends to increase with age. The meeting intensity for the groups at state $\textbf{K}$ and $\textbf{I}$ is determined based on the usage of popular social media platforms by people in different age groups \cite{Gottfried_2024}, where age group 18-29 $\beta_{\textbf{K}}$ is set benchmark as 0.5, and generalize to $\beta_{\textbf{I}}$'s via a sample ratio provided in \cite{rumorcas}. We set the initial state distribution flow to be $p_{0}(\textbf{S}) = 0.95$, $p_{0}(\textbf{K}) = 0.02$, $p_{0}(\textbf{I}) = 0.03$ and $p_{0}(\textbf{R}) = 0$. The terminal cost is set $c_j = 0$. Within this experiment, we conduct pairwise comparison between three policies. Recall that in the cost function, $\lambda_{\textbf{I}}$ and $\lambda_{\textbf{K}}$ are the two variables that could be adjusted by some regulators, representing \textit{penalty for spreading rumors} and \textit{reward for truth}, respectively. Our focus is on determining whether it is more effective to \textit{reward} or \textit{penalize} individuals based on their states in order to achieve more efficient rumor control. We therefore choose three different policy (cost function) parameters, \begin{itemize}
    \item \textbf{Policy 0}: $\lambda_{\textbf{K}} = 1.0$, $\lambda_{\textbf{I}} = 0.25$ (light on both)
    \item \textbf{Policy 1}: $\lambda_{\textbf{K}} = 1.5$, $\lambda_{\textbf{I}} = 0.25$ (stress on reward)
    \item \textbf{Policy 2}: $\lambda_{\textbf{K}} = 1.0$, $\lambda_{\textbf{I}} = 0.20$ (stress on penalty)
\end{itemize}
Note we did not use group-specific $\lambda$'s, which can be considered for further exploration. Policy 0 as benchmark is compared with the other two in Figures~\ref{fig:e1f1} and \ref{fig:e1f2}. We note Policy 2 significantly reduces the probability density of the Fake (\textbf{I}) state. In addition, a stronger reward policy (i.e. Policy 1) for truth effectively motivates middle-aged individuals to increase their communication rate.

\begin{figure}[t]
    \centering
    \includegraphics[width=1.0\linewidth]{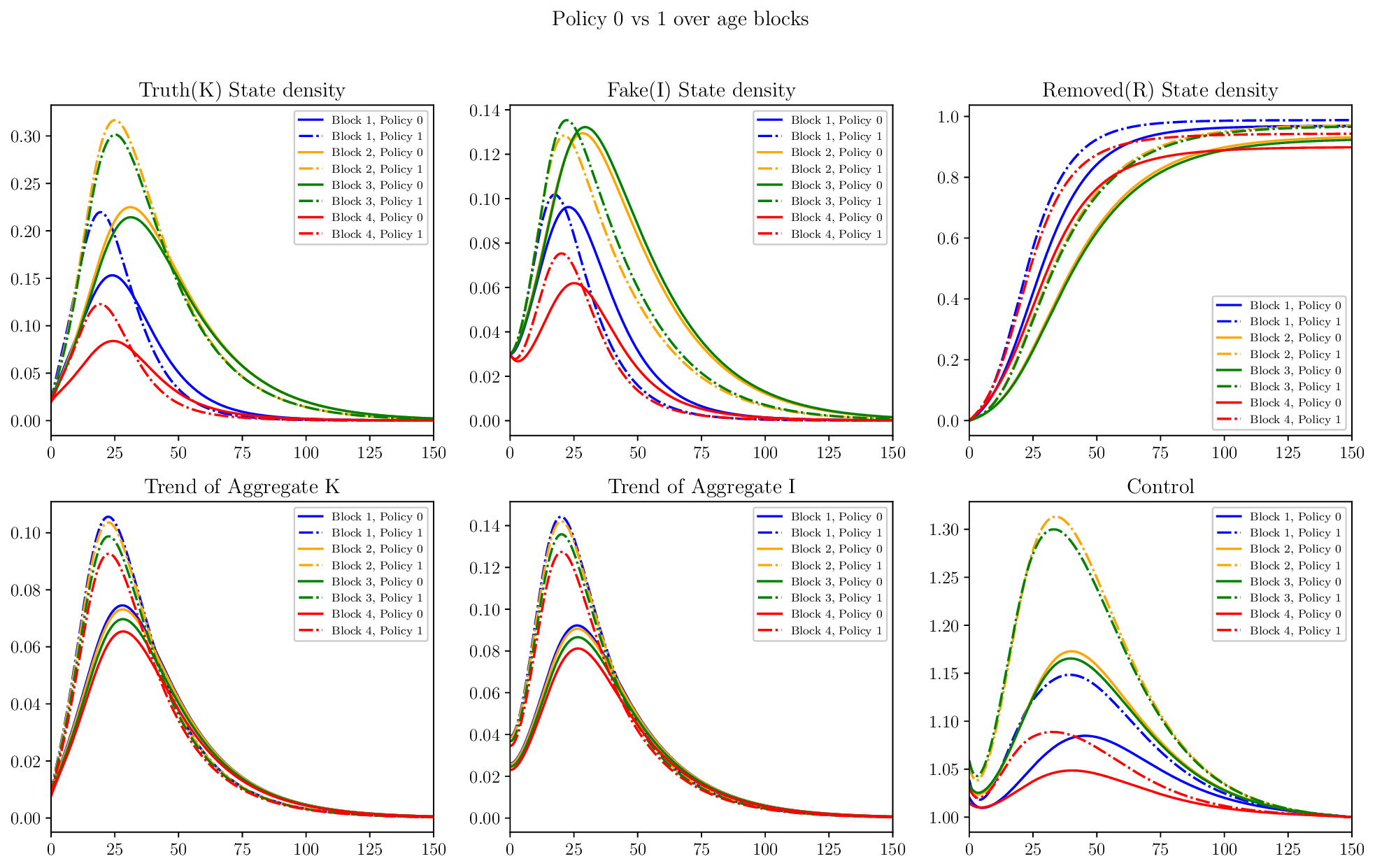}
    \caption{\small Policy 0 vs. Policy 1 w.r.t ages}
    \label{fig:e1f1}
\end{figure}

\begin{figure}[t]
    \centering
    \includegraphics[width=1.0\linewidth]{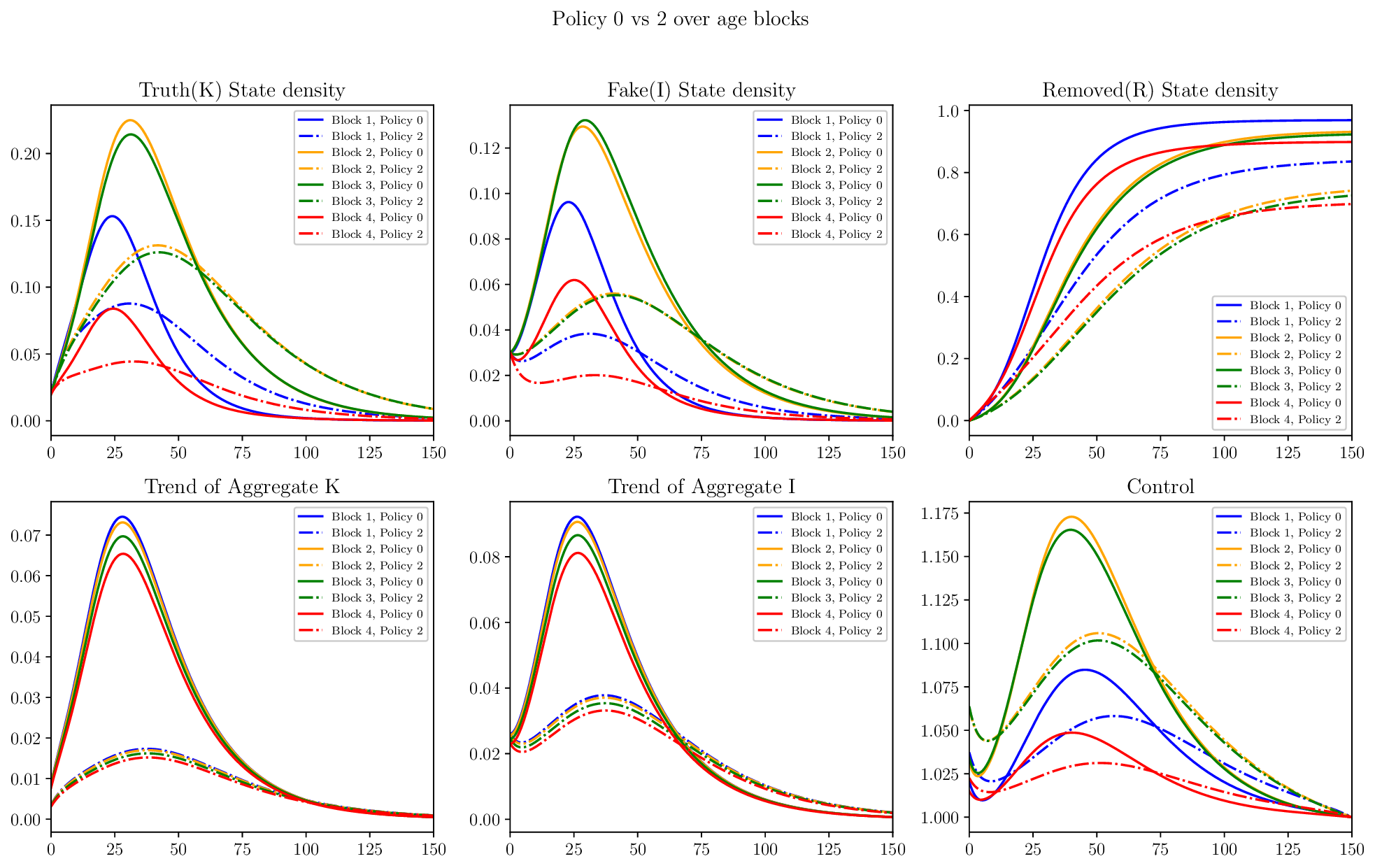}
    \caption{\small Policy 0 vs. Policy 2 w.r.t ages}
    \label{fig:e1f2}
\end{figure}

% some analysis

In the second experiment, we look for groups of popular social platforms and simulate with parameters retrieved from real world data. One may observe that this setting does not constitute strict \textit{grouping} among individuals, as individuals can use multiple social platforms simultaneously. 
To overcome this, we will assign each individual to the primary platform they use. 
\small
\begin{table}[ht]
\centering
\begin{tabular}{|c|c|c|c|c|}
\hline
Platform & Instagram & Facebook & Tiktok & Twitter \\ \hline
Instagram & 0.5 & 0.9 & 0.5 & 0.6 \\ \hline
Facebook & 0.9 & 1 & 0.85 & 0.55 \\ \hline
Tiktok & 0.5 & 0.85 & 0.5 & 0.4 \\ \hline
Twitter & 0.6 & 0.55 & 0.4 & 0.3 \\ \hline
\end{tabular}
\caption{\small Experiment 2 Graphon}
\label{tab:e2graphon}
\end{table}
\normalsize
\small
\begin{table}[ht]
\centering
\begin{tabular}{|c|c|c|c|c|c|}
\hline
\small Platform & $\beta_{\textbf{S}}$ & $\beta_{\textbf{K}}$ & $\beta_{\textbf{I}}$ & $\mu_{\textbf{K}}$ & $\mu_{\textbf{I}}$ \\ \hline
\small Instagram & 0.4 & 0.35 & 0.5 & 0.1 & 0.1 \\ \hline
\small Facebook & 0.4 & 0.5 & 0.75 & 0.1 & 0.1 \\ \hline
\small Tiktok & 0.4 & 0.25 & 0.35 & 0.1 & 0.1 \\ \hline
\small Twitter & 0.4 & 0.2 & 0.3 & 0.1 & 0.1 \\ \hline
\end{tabular}
\caption{\small Experiment 2 Coefficients}
\label{tab:e2coef}
\end{table}
\normalsize
The piecewise constant graphon weights (table~\ref{tab:e2graphon}) are established based on the reported proportion of individuals using two platforms simultaneously \cite{Sue_2021}, while the internal connectivity is decided with the reported social media ranking of news consumption \cite{West_2024}. Coefficients in table \ref{tab:e2coef} are chosen similarly to the first experiment. Contrary to the regulator, this experiment examines how two competing parties spread rumors, aiming to identify the fastest dissemination strategy. In other words, initial distribution is represented as a scheme here. We simulate two approaches: either dispersing the promotion cost across four platforms simultaneously (\textbf{Scheme 0}) or concentrating all resources on one platform (\textbf{Scheme 1,2,3,4} for four platforms). In addition, we add forgetting rate $\gamma := 0.1$ that goes from \textbf{R} back to \textbf{S} for further variability. The results can be seen in Figures~\ref{fig:e2f1} and \ref{fig:e2f2}. We note group 2 (facebook) that is more closely connected to other groups tend to remain more active in both the \textbf{K} and \textbf{I} states. Also, a concentrated outbreak of fake news on a single platform does not destabilize or collapse the system.

\begin{figure}[t]
    \centering
    \includegraphics[width=1.0\linewidth]{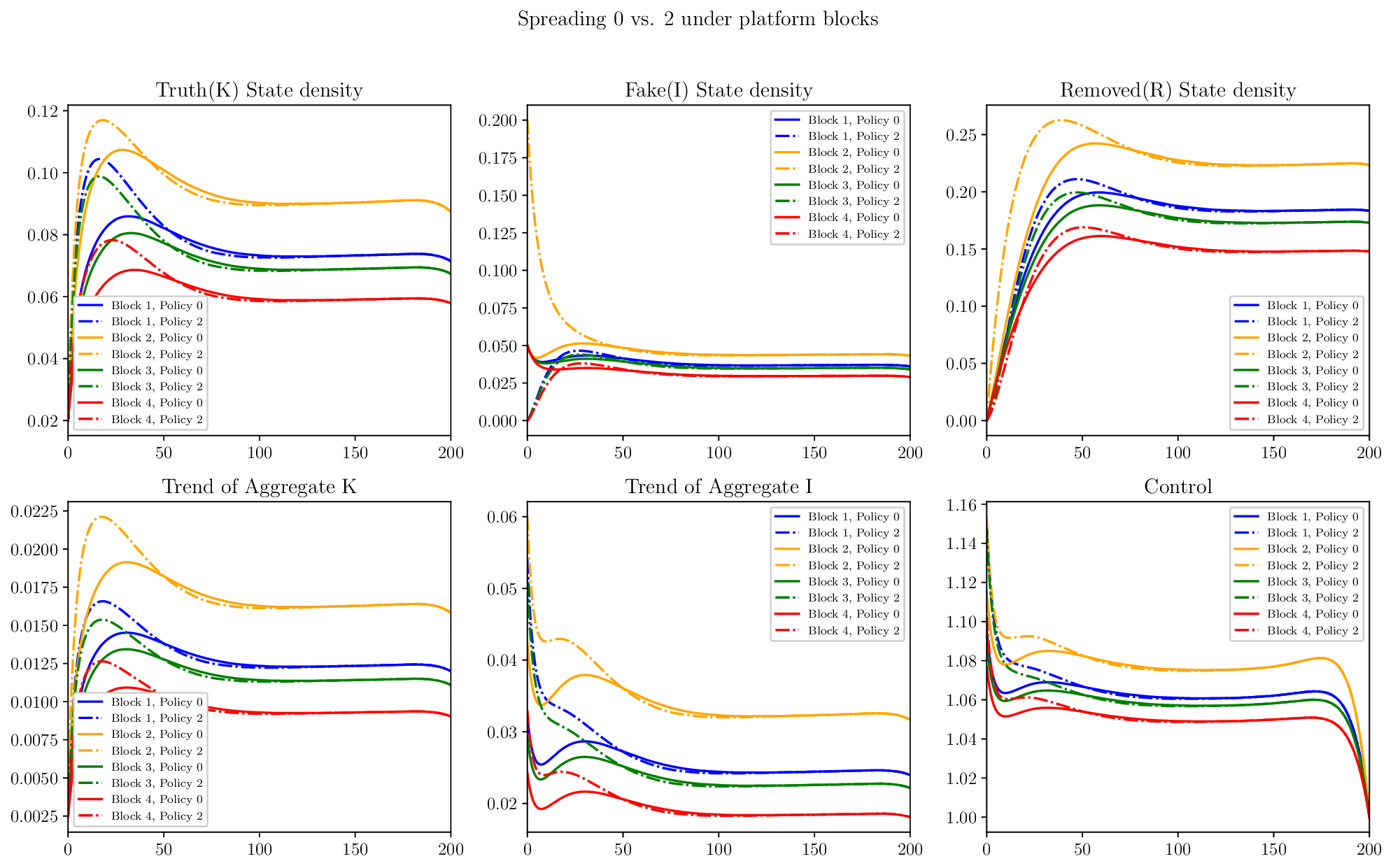}
    \caption{\small Scheme 0 vs. Scheme 2}
    \label{fig:e2f1}
    \vskip-4mm
\end{figure}

\begin{figure}[t]
    \centering
    \includegraphics[width=1.0\linewidth]{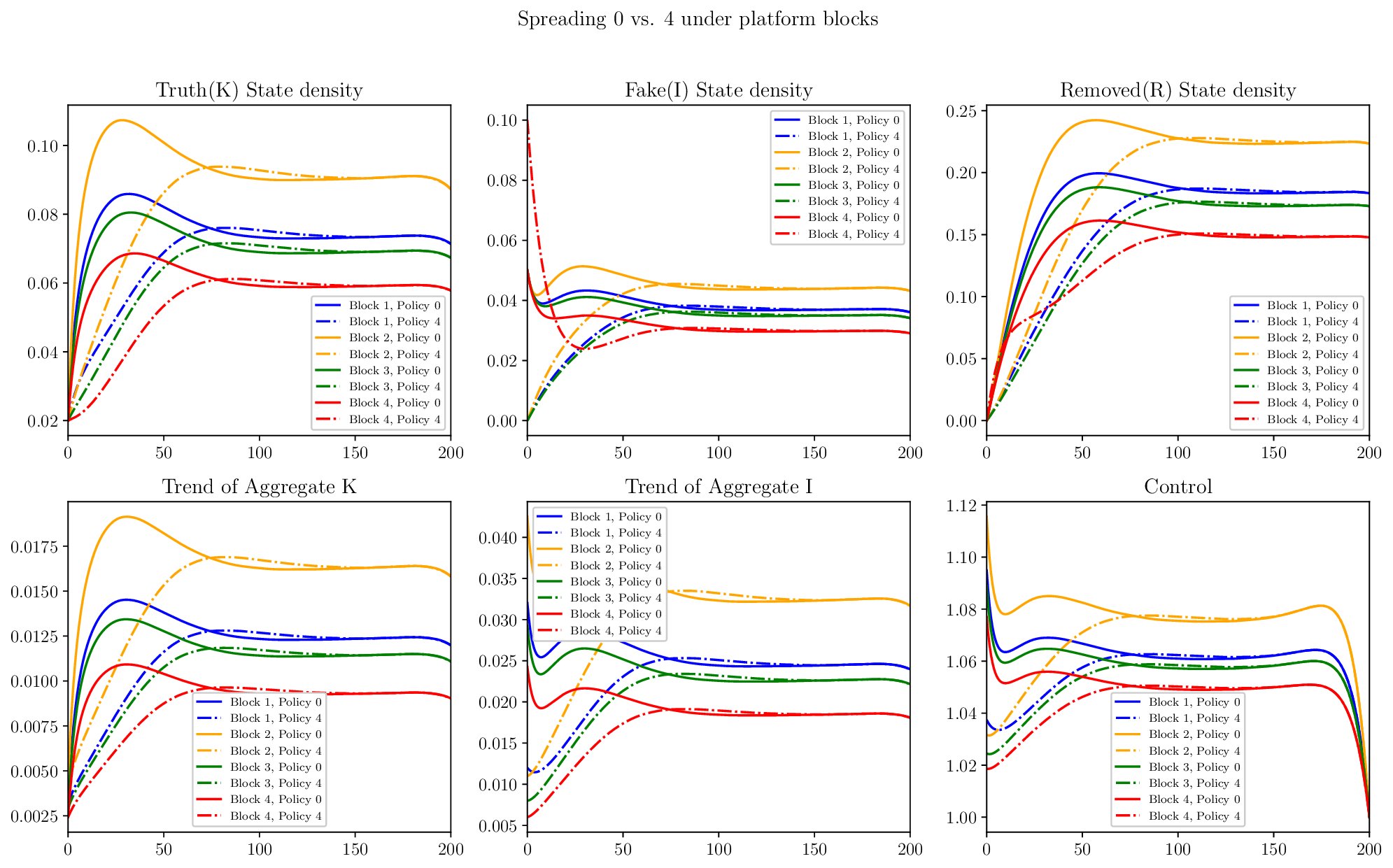}
    \caption{\small Scheme 0 vs. Scheme 4}
    \label{fig:e2f2}
    \vskip-4mm
\end{figure}

\section{Conclusions and Future Work}
\label{sec:conclusion-future}

{\bf Conclusions. } In this paper, we model how the policies affect the propagation of rumors (such as fake news) in a large population of non-cooperative agents that are interacting on a network (such as social network). Our rumor propagation model is inspired by the models from the system dynamics literature, where we extend the SKIR model by adding individual controls and weighted interactions with other individuals in the dynamics. We introduce the finite player model and the limiting graphon game model to approximate the Nash equilibrium when the number of players goes to infinity. We analyze the graphon game and characterize the graphon game Nash equilibrium with a \textit{continuum} of FBODE system. Furthermore, we prove the existence of the equilibrium. Finally, we introduce examples that use a piecewise constant graphon and analyze the numerical results to understand the effects of different policies.

{\bf Future Work. } Our future work has three main directions. First, we plan to implement numerical approaches to treat general graphons. Since using general graphons means that we need to solve the full continuum of FBODEs, other approaches need to be used such as deep learning~\cite{graphon_epidemics}. Second, we plan to use real life data to estimate the underlying network structure for the application of interest. Finally, we plan to implement a regulator who has their own objectives and analyze the Stackelberg equilibrium between the regulator and the graphon game population.

\bibliographystyle{ieeetr}

% Bibilography
\bibliography{References.bib}

\newpage
\onecolumn

\appendices 

\section{Detailed proof of Theorem~\ref{existencetheorem}}
\label{app:exproofdetail}

We start by defining the solution space for $(u,p)$. Denote $\mathcal{K}_{C_1}$ to be a closed ball of all continuous functions that maps $[0,T]$ to $L^{2}(I \times E)\times L^2(I \times E)$. Some natural constraints includes $p^x(t, e) \geqslant 0$ and $\sum_{e \in E} p^x(t, e)=1$ for all $t \in [0,T]$, $x \in I$ and $e \in E$. In addition let $(u,p)$ bounded by some $C_1 > 0$ with uniform norm. That is
    $$\mathcal{K}_{C_{1}}:=\left\{(u, p) \in C\left([0, T] ; L^2(I \times E) \times L^2(I \times E)\right): \sum_{e \in E} p^x(t, e)=1, p^x(t, e) \geqslant 0 \ \forall e \in E, \|(u, p)\|_{\infty} \leqslant C_1 \right\}$$
By assumption on $f$ and $Q$-matrix, the mapping $\alpha \mapsto H^x\left(t, e,\left(z_{\bK}, z_{\bI}\right), h, \alpha\right)$ is strongly convex. Its unique minimizer is the optimal control $\hat{\phi}^x(t, e)$, $x \in I$ and $e \in E$. \\ Fix $(u, p) \in \mathcal{K}_{C_{1}}$, define the aggregate mapping
    \begin{equation}
        \begin{aligned} & \Phi^{(u, p)}\left(\left(Z_{t, \textbf{K}}^x, Z_{t, \textbf{I}}^x\right)_{t \in [0, T], x \in I}\right)= \left( \int_I w(x, y) \hat{\phi}^y(t, \textbf{I}) p^y(t, \textbf{I}) d y, \int_I w(x, y) \hat{\phi}^y(t, \textbf{K}) p^y(t, \textbf{K}) d y\right)_{t \in [0, T], x \in I}\end{aligned}
    \end{equation}
    Define the aggregate space, where $|\cdot|$ denotes standard 1-norm, 
    \begin{equation}
        \mathcal{Z}:=\left\{f \in C\left([0, T] ; L^2(I ; \mathbb{R} \otimes \mathbb{R})\right):\left|f_t^x\right|_{[1]} \leq \bar{A} \text{ and } \left|f_t^x\right|_{[2]} \leq \bar{A}, t \in[0, T]\right., a.e. \left.x \in I\right\}
    \end{equation}
    Given the 2-dimensional functional case, let $$\|f\|_{\mathcal{Z}}:=\sup _{t \in[0, T]} \int_I|f(t)(x)| dx$$ 
    One could check that $\|\cdot\|_{\mathcal{Z}}$ is a well-defined norm. It follows $\Phi^{(u,p)}(\mathcal{Z}) \subset \mathcal{Z}$ by boundedness of control. Then since $C\left([0, T] ; L(I ; \mathbb{R} \otimes \mathbb{R})\right)$ is complete, $\mathcal{Z}$ as its closed subset is complete. The completeness of $\mathcal{Z}$ is briefly stated and proved in lemma \ref{closednesslemma}. Then we show $\Phi^{(u, p)}$ is a contraction mapping under this construction. \\
    We first state the local Lipschitz continuity (with computed Lipschitz constant) of the optimal Markovian control. Note by assumption, $|f^{x}(t, e, (z_{\bK}, z_{\bI}), a)|$ is bounded by $L_{f}(\bar{A}) := \max{\left(\frac{1}{2}(\bddlambdaI + \bar{A})^2, \bddlambdaK\right)} + \frac{1}{2}(1+\bar{A})^2$, and $|g^{x}(e, (z_{\bK}, z_{\bI}), a)|$ is bounded by $\bar{c}$ for any $x \in I$, $e \in E$. By definition of value function, \begin{equation}\label{ubound}
        |u^x(t,\cdot)| \leq (T-t)\left(\max{\left(\frac{1}{2}(\bddlambdaI + \bar{A})^2, \bddlambdaK\right)} + \frac{1}{2}(1+\bar{A})^2\right) + \bar{c} = (T-t)L_{f}(\bar{A}) + \bar{c}
    \end{equation}\\
    Consider the lipschitz continuity of optimal control, for $\left(t, e, z_{\mathbf{K}}, z_{\mathbf{I}}\right)$ and $\left(t', e, z'_{\mathbf{K}}, z'_{\mathbf{I}}\right)$ in $[0,T] \times E \times [0,\bar{A}] \times [0,\bar{A}]$,
    \begin{equation}
        \label{lipsahat}
        \begin{aligned}
            |\hat{a}_{e}^x\left(t,\left(z_{\bK}, z_{\bI}\right), u^{x}(t,\cdot)\right) - \hat{a}_{e}^x\left(t',\left(z'_{\bK}, z'_{\bI}\right), u^{x}(t',\cdot)\right)| & \leq L_{\lambda_{\bI}}|t - t'| + T\bar{\beta}L_{f}(\bar{A})\left(|z_{\bK}-z_{\bK}'|+|z_{\bI}-z_{\bI}'|\right) \\ & =: L_{\lambda_{\bI}}|t - t'| + L_{\hat{a}}(\bar{A},T)\left(|z_{\bK}-z_{\bK}'|+|z_{\bI}-z_{\bI}'|\right)
        \end{aligned}
    \end{equation}
    With fixed $(u, p)$, consider $\left(Z_{t, \textbf{K}}^{x,1}, Z_{t, \textbf{I}}^{x,1}\right)_{t \in [0, T], x \in I}$ and $\left(Z_{t, \textbf{K}}^{x,2}, Z_{t, \textbf{I}}^{x,2}\right)_{t \in [0, T], x \in I}$ in $\mathcal{Z}$. Their distance follows
    \begin{equation}
    \begin{aligned}
        &\left\| \Phi^{(u, p)}\left(\left(Z_{t, \textbf{K}}^{x,1}, Z_{t, \textbf{I}}^{x,1}\right)_{t \in [0, T], x \in I}\right) - \Phi^{(u, p)}\left(\left(Z_{t, \textbf{K}}^{x,2}, Z_{t, \textbf{I}}^{x,2}\right)_{t \in [0, T], x \in I}\right) \right\|_{\mathcal{Z}} \\
        &
        \leqslant \sup_{t \in [0, T]} \int_I \int_I w(x, y)\left|\hat{\phi}^{y, 1}(t, \textbf{I}) p^{y, 1}(t, \textbf{I})-\hat{\phi}^{y, 2}(t, \textbf{I}) p^{y,2}(t, \textbf{I})\right| dy + \\& \quad \quad \quad \quad \quad \quad \int_I w(x, y)\left|\hat{\phi}^{y, 1}(t, \textbf{K}) p^{y, 1}(t, \textbf{K})-\hat{\phi}^{y, 2}(t, \textbf{K}) p^{y,2}(t, \textbf{K})\right| dy \ dx
        \\
        &\leqslant \|\omega\|_{L^2(I \times I)} L_{\hat{a}}(\bar{A},T) \sup _{t \in[0, T]}\int_I\left|Z_{t, \textbf{K}}^{x, 1}-Z_{t, \textbf{K}}^{x, 2}\right|+\left|Z_{t, \textbf{I}}^{x, 1}-Z_{t, \textbf{I}}^{x, 2}\right| d x \\
        &\leqslant L_{\hat{a}}(\bar{A},T) \|Z_{t, \textbf{K}}^{x, 1}-Z_{t, \textbf{K}}^{x, 2}, Z_{t, \textbf{I}}^{x, 1}-Z_{t, \textbf{I}}^{x, 2} \|_{\mathcal{Z}}
    \end{aligned}
    \end{equation}

    Thus apply Banach fixed point theorem, we have shown the existence and uniqueness of fixed point for $\Phi$ on $\mathcal{Z}$, denoted $\hat{Z}^{(u, p)} :=\left(\hat{Z}_{t, \textbf{K}}^{(u, p), x}, \hat{Z}_{t, \textbf{I}}^{(u, p), x}\right)_{t \in [0,T], x \in I}$. \\
    Fixing $(\hat{Z}(u, p))_{\textbf{K}, \textbf{I}}$ and $u$, we solve the Kolmogorov equation to obtain the solution $\hat{p}$. The Cauchy-Lipschitz-Picard theorem guarantees both the existence and uniqueness of $\hat{p}$ (\cite{brezis2010functional}, Theorem 7.3) by considering $q$ as a linear operator on the Banach space $L(I \times E)$. And by boundedness of entries of Q-matrix by assumptions, the time derivative of $\hat{p}$ is bounded. Therefore, we can conclude that $\hat{p}$ is equicontinuous.\\
    Then fixing $(\hat{Z}(u, p))_{\bK, \bI}$ and $\hat{p}$, we solve the HJB equation to obtain the solution $\hat{u}$. Similarly, the Cauchy-Lipschitz-Picard theorem ensures both the existence and uniqueness of $\hat{u}$ by treating $\hat{H}$ as a Lipschitz operator on the Banach space $L(I \times E)$ by lipschitz continuity of $\lambda_{\bI}(t)$. Additionally, by boundedness of entries of Q-matrix, $\lambda_{\bI}(t)$ and $c^{x}$, there is a uniform bound on the time derivative of $\hat{u}$ because the Hamiltonian $\hat{H}$ is bounded under model setup. Therefore, we can conclude that $\hat{u}$ is equicontinuous. 
    
    For large enough $C_1$ the map $\Psi: \mathcal{K}_{C_1} \ni(u, p) \mapsto(\hat{u}, \hat{p})$ has the same domain and codomain. Then by Arzela-Ascoli theorem, $\Psi: \mathcal{K}_{C_1}$ is compact. To apply Schauder theorem for $\Psi$, we show the continuity of $\hat{Z}$, $\hat{p}$ and $\hat{u}$.

    We start by $(\hat{Z}_{\bK}, \hat{Z}_{\bI})$. Note as joint convergence implies marginal convergence under $\cL^{1}$, it suffices to show the joint convergence. Denote $(\hat{Z}^{n, x}_{\bK}, \hat{Z}^{n, x}_{\bI})$ as the aggregate induced by $(u^n, p^n)$, it follows
    \begin{equation}
        \begin{aligned}
            \sup _{t \in[0, T]} \int_I & \left|\hat{Z}_{t, \bI}^{n, x}-\hat{Z}_{t, \bI}^x\right|+\left|\hat{Z}_{t, \bK}^{n, x}-\hat{Z}_{t, \bK}^x\right| dx = \\
            & \sup _t \int_I \left(\left|\int_I w(x, y) \hat{\phi}^{n, y}(t, \bI) p^{n, y}(t, \bI)-w(x, y) \hat{\phi}^{y}(t,\bI) p^y(t, \bI) d y\right| \right. + \\ & \left. \left|\int_I w(x, y) \hat{\phi}^{n, y}(t, \bK) p^{n, y}(t, \bK)-w(x, y) \hat{\phi}^{y}(t,\bK) p^y(t, \bK) d y\right| \right) dx\\
            &\leq \sup_{t} \int_{I} \left|\hat{\phi}^{n, y}(t, \bI)-\hat{\phi}^y(t, \bI)\right| p^{n, y}(t, \bI) + \hat{\phi}^y(t, \bI) \left| p^{n, y}(t, \bI) - p^{y}(t, \bI) \right| + \\& \left|\hat{\phi}^{n, y}(t, \bK)-\hat{\phi}^y(t, \bK)\right| p^{n, y}(t, \bK) + \hat{\phi}^y(t, \bK) \left| p^{n, y}(t, \bK) - p^{y}(t, \bK) \right| dy
        \end{aligned}        
    \end{equation}
    The bound goes to 0 as $n \rightarrow \infty$ by the continuity of $\hat{\phi}^{y}$. Then we show the continuity of $\hat{p}$. For $s \in [0,T]$, it follows
    \begin{equation}
        \begin{aligned}
            \int_I\left|\hat{p}^{n, x}(s, .)-\hat{p}^x(s, \cdot)\right| d x & \leq
         \int_I \int_0^s \sum_{e, e^{\prime} \in E} \left| q_{e^{\prime}, e}^x\left(\hat{\phi}^{n, x}\left(t, e^{\prime}\right), \hat{Z}_{t, \bK}^{n, x}, \hat{Z}_{t, \bI}^{n, x}\right)\hat{p}^{n,x}(t,e') - q_{e^{\prime}, e}^x\left(\hat{\phi}^{x}\left(t, e^{\prime}\right), \hat{Z}_{t, \bK}^{x}, \hat{Z}_{t, \bI}^{x}\right)\hat{p}^{x}(t,e') \right| dt dx\\
         & \leq \int_I \int_0^s \sum_{e, e^{\prime} \in E}
         \left|q_{e^{\prime}, e}^x\left(\hat{\phi}^{n, x}\left(t, e^{\prime}\right), \hat{Z}_{t, \bK}^{n, x}, \hat{Z}_{t, \bI}^{n, x}\right)\right|\left|\hat{p}^{n,x}(t,e') - \hat{p}^{x}(t,e')\right| + \\
         & \left|q_{e^{\prime}, e}^x\left(\hat{\phi}^{n, x}\left(t, e^{\prime}\right), \hat{Z}_{t, \bK}^{n, x}, \hat{Z}_{t, \bI}^{n, x}\right) - q_{e^{\prime}, e}^x\left(\hat{\phi}^{x}\left(t, e^{\prime}\right), \hat{Z}_{t, \bK}^{x}, \hat{Z}_{t, \bI}^{x}\right)\right| \left|\hat{p}^{x}(t,e')\right| dt dx \\
         & \leq C\left( \int_0^s \sum_{e, e^{\prime} \in E} \int_I \left| \nqq - \qq \right| dx dt + \right. \\ & \left. \int_{0}^{s} \int_{I} \left| \npp - \pp \right| dxdt \right)
        \end{aligned}
    \end{equation}

    The last inequality holds by the boundedness of $\hat{p}$ and $q$, where $C$ is a constant of $A$ and $\bar{\beta}$. Then by Grönwall's inequality, we have $$\int_{I} \left| \npp - \pp \right| dx \leq C \int_0^s \sum_{e, e^{\prime} \in E} \int_I \left| \qq - \nqq \right| dx dt$$   
The bound goes to 0 as $n \rightarrow \infty$ by the the continuity of $\hat{\phi}^{x}$ and the model form.
We lastly prove the continuity of $\hat{u}$. Note
\begin{equation}
    \begin{aligned}
    \int_I\left|\hat{u}^{n, x}(s, \cdot)-\hat{u}^x(s, \cdot)\right| d x
& \leq \int_I \int_s^T \left|-H^x\left(t, e,\left(\hat{Z}_{t, \bK}^{n, x}, \hat{Z}_{t, \bI}^{n, x}\right), \Delta_e \hat{u}^{n, x}(t, \cdot)\right)
+H^x\left(t, e,\left(\hat{Z}_{t, \bK}^{n, x}, \hat{Z}_{t, \bI}^n\right), \Delta_e \hat{u}^x(t, \cdot)\right)\right| d t d x \\
& \leq \int_I \int_s^T \sum_{s,s' \in E} \left|-f^x\left(t, e,\left(\hat{Z}_{t, \bK}^{n, x}, \hat{Z}_{t, \bI}^{n,x}\right), \hat{\phi}^{n, x}\left(t, e^{\prime}\right)\right)+f^x\left(t, e,\left(\hat{Z}_{t, \bK}^x, \hat{Z}_{t, \bI}^x\right), \hat{\phi}^x\left(t, e^{\prime}\right)\right) \right| d t d x \\
& + \int_I \int_s^T \sum_{s,s' \in E} \left| \qq - \nqq \right| \left| \uu \right| dt dx \\
& + \int_I \int_s^T \sum_{s,s' \in E} \left| \qq \right| \left| \uu - \nuu \right| dt dx \\
& \leq \cdots + C \int_I \int_s^T \left| \uu - \nuu \right| dt dx
    \end{aligned}
\end{equation}
Similarly, the second inequality uses the boundedness of entries of $Q$-matrix; and $\cdots$ represents the first two terms in a fold. Thus by Grönwall's inequality, it follows
\begin{equation}
    \begin{aligned}
        \int_I\left|\hat{u}^{n, x}(s, \cdot)-\hat{u}^x(s, \cdot)\right| d x\leq & C\left(\int_I \int_s^T\left|q_{e, e^{\prime}}\left(\hat{\phi}^{x}(t,e), \hat{Z}_{t, \bK}^x, \hat{Z}_{t, \bI}^x\right)-q_{e, e^{\prime}}\left(\hat{\phi}^{n,x}(t,e), \hat{Z}_{t, \bK}^{n,x}, \hat{Z}_{t, \bI}^{n,x}\right)\right|^2 d t d x\right. \\ & \left.+\int_I \int_s^T\left|-f^x\left(t, e, \hat{Z}_{t, \bK}^{n,x}, \hat{Z}_{t, \bI}^{n,x}, \hat{\phi}^{n,x}(t,e)\right)+f^x\left(t, e, \hat{Z}_{t, \bK}^{x}, \hat{Z}_{t, \bI}^{x}, \hat{\phi}^{x}(t,e)\right)\right|^2 d t d x\right)
    \end{aligned}
\end{equation}
Again the continuity of $(\hat{Z}_{\bK}, \hat{Z}_{\bI})$ induces the convergence of the two $q$ functions. This completes the proof.

    \begin{lemma}
        \label{closednesslemma}
        $\mathcal{Z}$ is closed in $C\left([0, T] ; L^2(I ; \mathbb{R} \otimes \mathbb{R})\right)$ equipped with norm $\|\cdot\|_{\mathcal{Z}}$.
    \end{lemma}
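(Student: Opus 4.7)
The plan is to take an arbitrary sequence $\{f_n\}_{n \in \NN} \subset \mathcal{Z}$ converging in $\|\cdot\|_{\mathcal{Z}}$ to some $f$ in the ambient space, and verify that the coordinatewise bounds defining $\mathcal{Z}$ are preserved under the limit. Since the only nontrivial constraint beyond continuity-in-time (which is inherited from the ambient Banach space) is that $|f(t,x)|_{[i]} \leq \bar{A}$ for $i = 1,2$, for every $t \in [0,T]$ and almost every $x \in I$, the proof reduces to extracting an almost-everywhere pointwise subsequential limit and reading off the inequality.

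First I would translate $\|\cdot\|_{\mathcal{Z}}$-convergence into pointwise-in-$t$ $L^1$-convergence on $I$: for every $t \in [0,T]$,
$$\int_I |f_n(t,x) - f(t,x)|\,dx \leq \|f_n - f\|_{\mathcal{Z}} \longrightarrow 0,$$
so $f_n(t,\cdot) \to f(t,\cdot)$ in $L^1(I; \RR^2)$. Then for each fixed $t$ I extract a subsequence $\{f_{n_k(t)}(t,\cdot)\}_k$ (possibly depending on $t$) that converges to $f(t,\cdot)$ almost everywhere on $I$. Each $f_{n_k(t)}$ belongs to $\mathcal{Z}$, so $|f_{n_k(t)}(t,x)|_{[i]} \leq \bar{A}$ holds on a full-measure subset of $I$; the countable intersection over $k$ is still full measure, and on it the pointwise limit satisfies $|f(t,x)|_{[i]} \leq \bar{A}$ for $i = 1,2$. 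This is exactly the defining condition of $\mathcal{Z}$, so $f \in \mathcal{Z}$, which establishes closedness.

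I expect no real obstacle here: the argument is a standard closedness-of-the-constraint-set result built on the fact that $L^1$-convergence on the finite-measure set $I = [0,1]$ passes to a.e.\ pointwise convergence along a subsequence, and pointwise limits preserve (non-strict) inequalities. The only mild subtlety is that the extracting subsequence depends on $t$, but since the defining a.e.\ condition in $\mathcal{Z}$ is stated pointwise in $t$, this causes no difficulty. If desired, one may additionally observe that the uniform coordinatewise bound implies $\|f(t,\cdot)\|_{L^2(I;\RR \otimes \RR)} \leq \sqrt{2}\,\bar{A}$, and that continuity in $L^1$ combined with a uniform $L^\infty$ bound upgrades to continuity in $L^2$ via $\|g\|_{L^2}^2 \leq \|g\|_{L^\infty}\|g\|_{L^1}$, so $f \in C([0,T]; L^2(I;\RR\otimes\RR))$ is automatic and consistent with the declared ambient space.
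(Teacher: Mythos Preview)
Your argument is correct, but it takes a different route from the paper's. The paper argues by contradiction: assuming the limit $f$ violates one of the coordinate bounds, it locates a time $t_0$ and a positive-measure set $\tilde I\subset I$ on which $|f(t_0,x)|_{[i]}>\bar A+\epsilon_0$ for some $\epsilon_0>0$, and then observes that for every $n$,
\[
\int_I |f_n(t_0,x)-f(t_0,x)|\,dx \;\ge\; \int_{\tilde I} |f_n(t_0,x)-f(t_0,x)|\,dx \;>\; \epsilon_0\, m(\tilde I),
\]
which contradicts $\|f_n-f\|_{\mathcal Z}\to 0$. Your approach instead passes through the standard measure-theoretic fact that $L^1$-convergence yields a.e.\ convergence along a subsequence, and then reads off the non-strict inequality in the pointwise limit. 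The paper's contradiction argument is slightly more self-contained (it avoids invoking the subsequence lemma), while your route is the canonical way one shows that a.e.\ pointwise constraints define closed sets under $L^p$ limits; your extra remark about upgrading $L^1$ to $L^2$ continuity via $\|g\|_{L^2}^2\le \|g\|_{L^\infty}\|g\|_{L^1}$ is a nice observation but, as you note, not needed here since membership of $f$ in the ambient space $C([0,T];L^2(I;\RR\otimes\RR))$ is already part of the hypothesis.
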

    \begin{proof}
        First note $\mathcal{Z}$ is closed if and only if any convergent sequence $(f_n)_{n=1}^{\infty}$ in $\mathcal{Z}$ converges to some $f \in \mathcal{Z}$. For the sake of contradiction, suppose there exists $(f_n)_{n=1}^{\infty} \in \mathcal{Z}$ converges to $f \notin \mathcal{Z}$. Then there exists $t_{0} \in [0,T]$ such that there exists a measurable set $\tilde{I} \subset I$ with $m(\tilde{I}) > 0$, that $f(t_0)(x) > \bar{A} + \epsilon_0$ or $f(t_0)(x) < -\bar{A} -\epsilon_0$ for some $\epsilon_0 > 0$, $x \in \tilde{I}$. Here $m$ is a measure equipped on $I$. Then $\forall n \in \mathbb{N}$, $$\int_{I}\left|f_n\left(t_0\right)(x)-f\left(t_0\right)(x)\right| d x \geq \int_{\tilde{I}}\left|f_n\left(t_0\right)(x)-f\left(t_0\right)(x)\right| d x>\epsilon_0 \cdot m(\tilde{I})$$
        There is contradiction and thus completes the proof.
    \end{proof}

\end{document}